\numberwithin{equation}{section}	
\theoremstyle{plain}
\newtheorem{theorem}{Theorem}[section]
\newtheorem{lemma}[theorem]{Lemma}
\newtheorem{corollary}[theorem]{Corollary}
\theoremstyle{definition}
\theoremstyle{remark}
\newtheorem{remark}[theorem]{Remark}
\begin{document}

\keywords{Wen knot, welded knot, extended welded knot, 
Gauss diagram, Reidemeister move.} 


\title[On wen knots]
{On wen knots}

\date{\today}


\author{Celeste Damiani}
\address{Fondazione Istituto Italiano di Tecnologia (IIT), 
Via Morego 30, 16163 Genova, Italy.}
\email{celeste.damiani@iit.it}

\author{Shin Satoh}
\address{Department of Mathematics, 
Kobe University, 
Rokkodai-cho 1-1, Nada-ku, 
Kobe 657-8501, Japan}
\email{shin@math.kobe-u.ac.jp}

\thanks{During the writing of this paper C.D. was initially supported by JSPS KAKENHI Grant Number JP16F1679 and a JSPS Postdoctoral Fellowship For Foreign Researchers.  C.D is a member of GNSAGA of INdAM.
S. S. is partially supported by 
JSPS Grants-in-Aid for Scientific Research (C), 
19K03466.}

\subjclass[2020]{Primary 57K12, Secondary 57K45 }

\begin{abstract} 
We introduce the notion of wen knots, 
and prove that the set of wen knots 
is a proper subset of the set of extended welded knots. 
Furthermore we prove that 
the complementary subset consists of 
welded knots up to horizontal mirror reflections. This allow us to characterise completely extended welded knots by the parity of their number of wens, that we can always reduce to 0 or~1.
\end{abstract}

\maketitle

\section{Introduction}\label{sec1}

Welded knots are an extension of classical knots
in the $3$-sphere \cite{Fenn-Rimanyi-Rourke97, Kau3}, 
and extended welded knots are a further extension of welded knots
introduced in~\cite{D1}. 
Extended welded knots are motivated by the connection between welded knots and ribbon torus-links, which are oriented tori in $S^4$ that bound ribbon tori (immersed solid tori in $S^4$ with a $3$-dimensional orientation on themselves and a 1-dimensional orientation of their core, whose singularity sets consist of a finite number of ribbon disks that act as filling), up to ambient isotopy. Said connection is given by the Tube map, introduced in~\cite{Satoh:2000}, building on the work of Yajima~\cite{Yajima:62}, who defined a map that ``inflates'' classical knots into ribbon torus-knots, hence the name. In~\cite{Satoh:2000} the second author proves that the map is surjective, that the welded combinatorial knot group corresponds to the fundamental group of the complement in $S^4$ of the image, and that the map commutes with the operation of orientation reversal. However, the Tube map is not injective, and its kernel is not understood yet. The Tube map has also been carefully studied in~\cite{Audoux:2016}, where the interpretation of the invariance of the  map under generalised Reidemeister moves is given in terms of local filling changes.
There are hints that extended welded links might be suitable candidates to be a diagrammatic representation of ribbon torus-links, overcoming the non-injectivity of the Tube map defined on (non-extended) welded links: for instance, extended welded knots are equivalent to their horizontal mirror images~\cite{D2}, and their properties are instrumental in formalising a partial version of Markov's theorem for ribbon torus links~\cite{D3}.

\bigskip

In this paper, we will answer to 
the question: what is an extended welded knot?
For an integer $n\geq 0$, 
let $\mathcal{D}_n$ be the set of oriented virtual knot diagrams 
with $n$ dots called \emph{wens}. 
Consider the following local deformations as shown in
Figures~\ref{F:welded_moves} and \ref{fig101}: 
\begin{itemize}
\item
Classical Reidemeister moves \textrm{R1}--\textrm{R3}. 
\item
Virtual Reidemeister moves \textrm{R4}--\textrm{R7}. 
\item
An upper forbidden move \textrm{R8}.
\item
Wen moves \textrm{W1}--\textrm{W4}.
\end{itemize}
Then \textit{welded knots}, \textit{wen knots}, and 
\textit{extended welded knots} are defined by 
\begin{itemize}
\item
$\{\mbox{welded knots}\}=\mathcal{D}_0/(\mbox{R1--R8})$, 
\item
$\{\mbox{wen knots}\}=
\mathcal{D}_1/(\mbox{R1--R8, W1--W3})$, and 
\item
$\{\mbox{extended welded knots}\}=
\left(\bigsqcup_{n=0}^{\infty}\mathcal{D}_n\right)/(\mbox{R1--R8, W1--W4})$. 
\end{itemize}
That is, 
a welded knot is an equivalence class of virtual knot diagrams 
without wens under Reidemeister moves \textrm{R1}--\textrm{R8}, 
a wen knot is that of virtual knot diagrams 
with a single wen under R1--R8 and wen moves W1--W3 with the exception of W4, 
and an extended welded knot is that of virtual knot diagrams 
with a finite number of wens under \textrm{R1}--\textrm{R8} 
and \textrm{W1}--\textrm{W4}. 
We remark that in~\cite{D1, BKLPY, D2} an angled mark is used 
to indicate a wen instead of a dot.

\begin{figure}[htb]
\centering
\includegraphics[scale=.75]{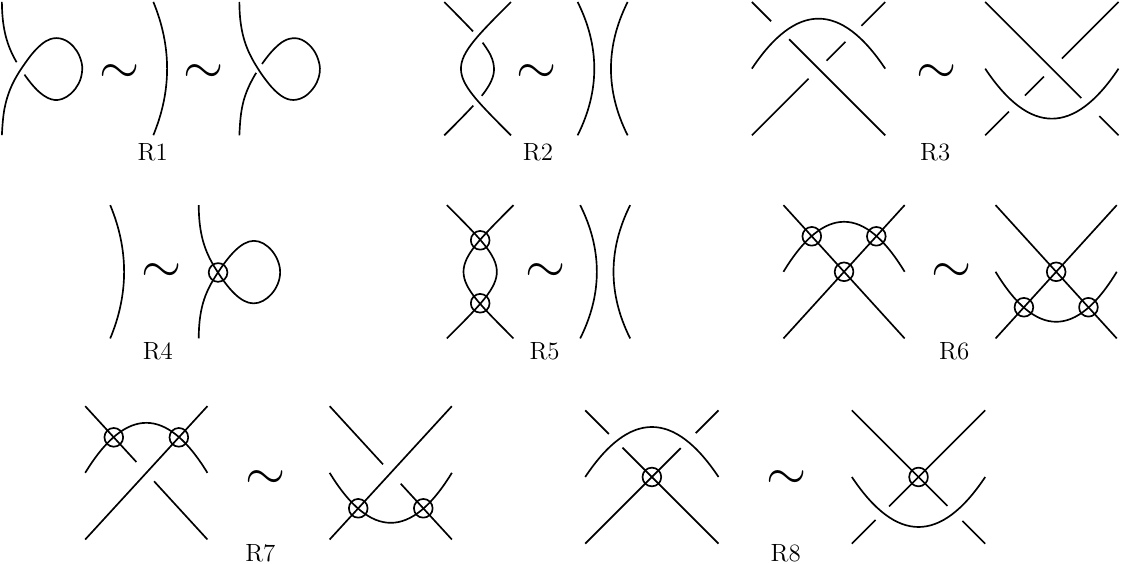}
\caption{Reidemeister moves R1--R8.}
\label{F:welded_moves}
\end{figure}

\begin{figure}[htb]
\centering
\includegraphics[scale=.75]{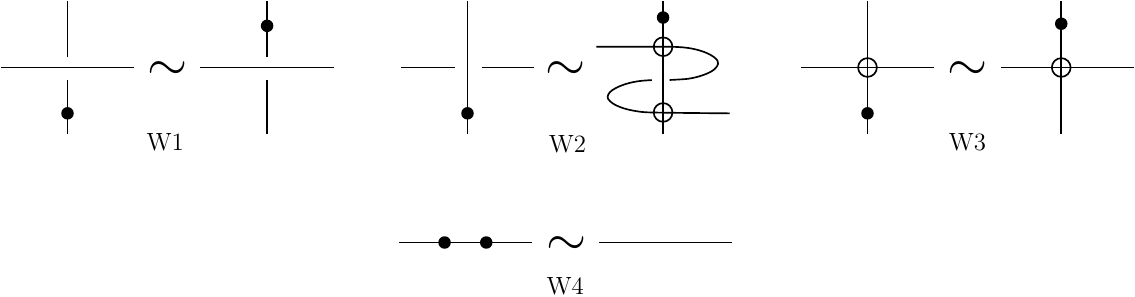}
\caption{Wen moves W1--W4.}
\label{fig101}
\end{figure}

The parity of the number of wens 
is an invariant of an extended welded knot. 
An extended welded knot is said to 
be \emph{of odd type} or \emph{of even type}
if it is presented by a virtual knot diagram with 
an odd or even number of wens, respectively. 
The inclusion maps 
\[
f:\mathcal{D}_0\rightarrow\bigsqcup_{n=0}^{\infty}\mathcal{D}_{2n} 
\mbox{ and }
g:\mathcal{D}_1\rightarrow\bigsqcup_{n=0}^{\infty}\mathcal{D}_{2n+1}
\]
induce the maps 
\[
\begin{array}{l}
f_*:\{\mbox{welded knots}\}\rightarrow
\{\mbox{extended welded knots of even type}\}
\mbox{ and}\\
g_*:\{\mbox{wen knots}\}\rightarrow
\{\mbox{extended welded knots of odd type}\}
\end{array}\]
naturally by taking the quotient 
under suitable Reidemeister moves and wen moves. 
Since any virtual knot diagram with a finite number of wens 
is related to a diagram with at most one wen by 
wen moves \textrm{W1}--\textrm{W4}, 
we see that $f_*$ and $g_*$ are surjective.

In this paper, we first prove that the map $g_*$ is injective; 
namely, we have the following.

\begin{theorem}\label{thm11}
Let $D$ and $D'$ be virtual knot diagrams with a single wen. 
If $D$ is related to $D'$ by a finite sequence of Reidemeister moves
\textrm{R1--R8} and wen moves \textrm{W1--W4}, 
then they are related by a finite sequence of  
\textrm{R1--R8} and \textrm{W1--W3}, without the need of \textrm{W4}.
\end{theorem}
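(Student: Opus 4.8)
The plan is to induct on the number of \textrm{W4} moves appearing in a sequence realising $D\sim D'$, showing that two of them (a creation and a deletion) can always be removed at the cost of inserting only \textrm{R1--R8} and \textrm{W1--W3} moves.

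Set up the induction as follows. Fix a sequence $D=D_0\to D_1\to\cdots\to D_N=D'$ of \textrm{R1--R8} and \textrm{W1--W4} moves and let $c_i$ be the number of wens of $D_i$. Since \textrm{R1--R8} and \textrm{W1--W3} preserve the number of wens while \textrm{W4} changes it by $\pm2$, the sequence $(c_i)$ starts and ends at $1$ and moves in steps $0,\pm2$; in particular each $c_i$ is odd, hence $\geq1$. If no \textrm{W4} move occurs we are done; otherwise $M:=\max_i c_i\geq3$, and we pick a maximal block of consecutive indices on which $c_i=M$. It is entered by a \textrm{W4} move $\mu$ that creates an adjacent pair of wens and left by a \textrm{W4} move $\nu$ that deletes an adjacent pair of wens, and every move strictly between $\mu$ and $\nu$ is an \textrm{R1--R8} or \textrm{W1--W3} move. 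Write $A$ (resp.\ $B$) for the diagram just before $\mu$ (resp.\ just after $\nu$); both lie in $\mathcal{D}_m$ with $m:=M-2\geq1$. Set $A^{+}\in\mathcal{D}_{M}$ to be $A$ with the pair created by $\mu$ inserted, so $A^{+}$ is related to $B^{+}$ by \textrm{R1--R8} and \textrm{W1--W3}, where $B^{+}$ is the diagram just before $\nu$, and $B$ is $B^{+}$ with the pair deleted by $\nu$ removed. Replacing the block $A\to A^{+}\to\cdots\to B^{+}\to B$ by a sequence from $A$ to $B$ that uses only \textrm{R1--R8} and \textrm{W1--W3} removes $\mu$ and $\nu$ and introduces no new \textrm{W4} moves, so the induction reduces everything to the following claim: \emph{for $m\geq1$, the map $\iota_{m}$ on equivalence classes induced by inserting an adjacent pair of wens,
\[
\iota_{m}\colon\ \mathcal{D}_{m}/(\textrm{R1--R8},\textrm{W1--W3})\ \longrightarrow\ \mathcal{D}_{m+2}/(\textrm{R1--R8},\textrm{W1--W3}),
\]
is injective.}

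To prove the claim I would build an explicit left inverse $\rho_{m}$, defined on a representative by ``slide two of the wens together and delete them''. Two facts make this well defined when $m\geq1$. First, \emph{an adjacent pair of wens behaves as a rigid unit that can be pushed past any feature of a diagram with no side effect}: pushing it past a classical crossing is two successive wen--crossing moves whose effects on that crossing cancel; pushing it past a virtual crossing, through an \textrm{R8} configuration, or past another wen is likewise a pair of cancelling (or vacuous) moves; and any \textrm{R1--R8} or \textrm{W1--W3} move can be performed after first sliding such a unit out of its support. Hence the only choices in the definition of $\rho_{m}$ that can matter are which pair of wens one brings together, which amounts to carrying a single wen from one point of the knot to another along one of the two complementary arcs. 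Carrying a single wen across an arc reverses the crossings that arc sweeps --- this is the local mechanism behind the fact that sliding a wen once around the whole knot realises the horizontal mirror --- so the two choices differ precisely by reversing all crossings, i.e.\ by a horizontal mirror of the resulting $m$-wen diagram. The second fact is that \emph{for $m\geq1$ a diagram is related to its horizontal mirror by \textrm{R1--R8} and \textrm{W1--W3}}: slide one of the $m$ present wens once around the knot. Therefore $\rho_{m}$ is well defined; since plainly $\rho_{m}\circ\iota_{m}=\mathrm{id}$, the claim follows. (For $m=0$ the second fact fails, and $\iota_{0}$ is genuinely not injective: this failure, seen by the horizontal mirror, is exactly the content of the companion statement about $f_{*}$, which is why the odd case behaves better than the even one.)

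The step I expect to be the main obstacle is the rigorous verification that an adjacent pair of wens is a ``rigid unit'' in the above sense: this is a finite but move-by-move check against the explicit moves of Figures~\ref{F:welded_moves} and~\ref{fig101}, the crux being that whatever a single wen does to a local configuration is cancelled by a second wen following it. Alongside it one must carry out the bookkeeping showing that the only residual indeterminacy in ``slide together and delete'' is a horizontal mirror. I would expect this to become manageable in the language of Gauss diagrams, where a wen becomes an operation on the chords and the ``arc'' statements above turn into clean statements about sub-intervals of the core circle.
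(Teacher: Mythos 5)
Your overall strategy---induct on the number of \textrm{W4} moves, isolate a maximal block entered by a \textrm{W4} creation and exited by a \textrm{W4} deletion, and reduce everything to the injectivity of the stabilisation $\iota_m$ via a retraction ``slide two wens together and cancel''---is viable, and it is essentially a one-pair-at-a-time version of what the paper does globally: the paper fixes a wen $w$ of a $(2n{+}1)$-wen Gauss diagram $G$, pairs off the remaining $2n$ wens into a union of arcs $A$, flips the sign of every chord whose initial (over-crossing) endpoint lies in $A$, and deletes those wens to obtain a single-wen diagram $G(w)$; your choice-independence-up-to-mirror is the analogue of its Lemma~\ref{lem21}, and your well-definedness under moves is the analogue of its Lemma~\ref{lem22}.

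The genuine gap is at exactly the step your ``rigid unit'' heuristic is meant to dispose of: compatibility of the retraction with \textrm{R3}. Forming the unit is not harmless---bringing two non-adjacent wens together flips every crossing whose over-strand the travelling wen traverses, and this sliding arc can be forced through the support of an \textrm{R3} move. Concretely, with $m+2=3$ wens the core circle is cut into three arcs, and an \textrm{R3} configuration whose three over-crossing preimages lie one in each arc admits no choice of pair, direction, or global mirror that avoids flipping at least one of the three crossings involved; so ``slide the unit out of the support first'' is unavailable, because the obstruction arises while the unit is being assembled, not while it is being parked. After such a flip, the move relating $\rho_m(X)$ to $\rho_m(Y)$ is a sign-altered \textrm{R3}, which is not an instance of \textrm{R3} at all. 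The paper's Lemma~\ref{lem22} is devoted to precisely this point: it enumerates the possible sign patterns on the three chords of Figure~\ref{fig203} and shows that the genuinely new ones are realised by a composite of \textrm{R3} and the forbidden-type move \textrm{R8} (Figure~\ref{fig204})---the availability of \textrm{R8} is what saves the argument. (For \textrm{R2} and \textrm{R8} the two relevant initial endpoints are adjacent with no wen between them, so they flip together and the move type survives; this is why only \textrm{R3} is delicate.) Without this lemma or an equivalent, your injectivity claim is unproved; with it supplied, your inductive scheme does go through, and your parenthetical explanation of why $\iota_0$ fails to be injective while $\iota_m$ ($m\geq 1$) does not is correct and matches Theorem~\ref{thm12}.
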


Next we consider the map $f_*$. 
A \emph{horizontal mirror reflection} 
of a virtual knot diagram $D$ is obtained by 
reflecting it with respect to a line in the plane on which the diagram lies, 
as in Figure~\ref{F:MirrorImage}. 
We denote by $D^\dagger$ the obtained diagram, 
and the move from $D$ to $D^\dagger$ is labeled by 
\textrm{M}. 
Then we have the following.

\begin{figure}[htb]
\centering
\includegraphics[scale=0.6]{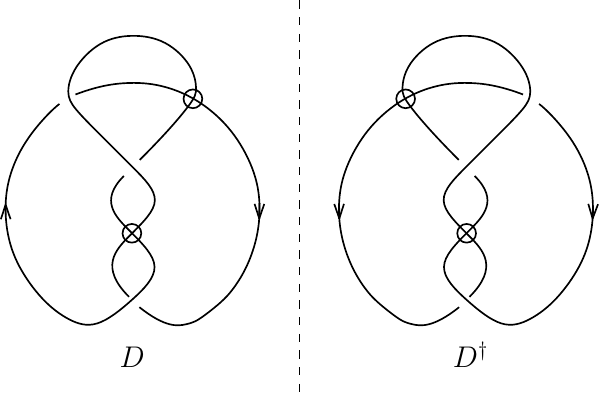}
\caption{A virtual knot diagram $D$ and its horizontal mirror reflection $D^\dagger$.}
\label{F:MirrorImage}
\end{figure}

\begin{theorem}\label{thm12}
Let $D$ and $D'$ be two virtual knot diagrams without wens. 
If $D$ is related to $D'$ by a finite sequence of Reidemeister moves
\textrm{R1--R8} and wen moves \textrm{W1--W4}, 
then 
they are related by a finite sequence of 
\textrm{R1--R8} and \textrm{M}, 
without the need of \textrm{W1--W4}.
\end{theorem}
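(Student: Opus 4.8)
The plan is to do surgery on the given sequence of moves, inducting on the largest number of wens occurring along it. Write $D=D_0\to D_1\to\cdots\to D_k=D'$ with each step an \textrm{R1}--\textrm{R8} or \textrm{W1}--\textrm{W4} move. Since \textrm{W1}--\textrm{W3} preserve the number of wens and \textrm{W4} changes it by $\pm2$, the quantity $w_i:=(\text{number of wens of }D_i)$ is even, moves in steps of $2$, and vanishes at both ends; let $N$ be its maximum, so $N$ is even. If $N=0$ only \textrm{R1}--\textrm{R8} occur and there is nothing to do. If $N\ge2$, the indices with $w_i=N$ form maximal runs, each entered from level $N-2$ by a \textrm{W4} move that creates a pair of wens, made only of \textrm{R1}--\textrm{R8} and \textrm{W1}--\textrm{W3} moves, and left by a \textrm{W4} move that destroys a pair of wens; the runs are separated by stretches of level $\le N-2$. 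Granting the claim below, each run, joining diagrams $E$ and $E'$ with $N-2$ wens, may be replaced by an $E\to E'$ sequence over \textrm{R1}--\textrm{R8}, \textrm{W1}--\textrm{W4} and possibly one \textrm{M}, never exceeding $N-2$ wens. Doing this to every run produces a sequence of maximal wen count $\le N-2$; since pushing an \textrm{M} past any other move merely replaces that move by its mirror (again a move of the same type) and $D^{\dagger\dagger}=D$, all the \textrm{M}'s can be collected at the right, leaving an \textrm{R1}--\textrm{R8}, \textrm{W1}--\textrm{W4} sequence of wen count $\le N-2$ from $D$ to $D'$ or to $D'^{\dagger}$; the inductive hypothesis relates that pair by \textrm{R1}--\textrm{R8} and \textrm{M}, and reinstating the collected \textrm{M}'s then relates $D$ and $D'$ by \textrm{R1}--\textrm{R8} and \textrm{M}.

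So everything reduces to the following claim: a \emph{bump} made of a \textrm{W4} move creating a pair of wens, then \textrm{R1}--\textrm{R8} and \textrm{W1}--\textrm{W3} moves (so the count stays $m+2$), then a \textrm{W4} move destroying a pair of wens, and joining two diagrams $E,E'$ with $m$ wens, can be replaced by an $E\to E'$ sequence over \textrm{R1}--\textrm{R8}, \textrm{W1}--\textrm{W4}, \textrm{M} of wen count $\le m$. For this I would use two ingredients. First, \textrm{W1}--\textrm{W3} transport a wen along the knot past any crossing and past any other wen, and \textrm{W4} is reversible, so, without changing the count, we may normalise the bump so that the pair destroyed at its end is the pair $(w,w')$ created at its start, both events taking place at one fixed point $p$, and so that the only moves touching $(w,w')$ drag $w$ around the knot some integer $\nu$ number of times (an integer because $w$ returns beside $w'$). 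Second, dragging a single wen once around the knot has, modulo \textrm{R1}--\textrm{R8}, the same effect as \textrm{M} --- the diagrammatic form of the fact that an extended welded knot equals its horizontal mirror~\cite{D2}, and the computation behind \cref{thm11} for a single wen. Hence the bump equals, up to \textrm{R1}--\textrm{R8}, the operation ``apply the genuine \textrm{R1}--\textrm{R8} moves of the bump to $E$, then $\textrm{M}^{\nu}$'', and $\textrm{M}^{\nu}$ is $\mathrm{id}$ or one \textrm{M} by the parity of $\nu$ (using $D^{\dagger\dagger}=D$); every such \textrm{R1}--\textrm{R8} move is supported in a disk off which $(w,w')$ can first be slid, so it may be performed at wen count $m$, and then so may $\textrm{M}^{\nu\bmod2}$. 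When $m\ge1$ the residual \textrm{M}, if present, can instead be realised by dragging a pre-existing wen once around the knot, so no \textrm{M} move is needed --- which is exactly why \cref{thm11} needs no mirror relation and why the even case genuinely differs.

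The step I expect to be the main obstacle is making that last paragraph rigorous: one must verify that the other $m$ wens and the \textrm{R1}--\textrm{R8} moves inside the bump do not obstruct the transport and round trip of $(w,w')$, and that dragging a lone wen once around the knot produces \textrm{M} exactly, up to \textrm{R1}--\textrm{R8}. This bookkeeping is cleanest on Gauss diagrams (as the keywords suggest): a wen becomes a marked point on the circle, \textrm{W1}--\textrm{W4} become local modifications of the arcs and of the arrow and sign data, a wen passing a chord endpoint flips the arrow or sign of that chord, and dragging the marked point once around the circle accumulates precisely the Gauss-diagram incarnation of \textrm{M}. A less combinatorial alternative trades this obstacle for a well-definedness check: build an invariant $\Phi$ of even-type extended welded knots with values in $\{\text{welded knots}\}/\textrm{M}$ by pairing consecutive wens, mirroring the arc between each pair to erase those two wens, and taking the \textrm{M}-class of the resulting wenless diagram; check that $\Phi$ is unaffected by \textrm{R1}--\textrm{R8}, \textrm{W1}--\textrm{W4}, and by the choices of pairing and of mirroring disks, and that $\Phi\circ f_*$ is the quotient map $\{\text{welded knots}\}\to\{\text{welded knots}\}/\textrm{M}$, so that $f_*(D)=f_*(D')$ forces $D$ and $D'$ to be related by \textrm{R1}--\textrm{R8} and \textrm{M}. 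Either way, the moral is that a wen performs the horizontal mirror from inside the diagram, so on wenless diagrams \textrm{W1}--\textrm{W4} add to \textrm{R1}--\textrm{R8} no relation beyond \textrm{M}.
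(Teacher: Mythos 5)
Your global scheme --- induction on the maximal number of wens along the sequence, with each top-level ``bump'' replaced by a sequence at lower wen count and the resulting \textrm{M}'s pushed to the end --- is sound in outline and genuinely different from the paper's proof, which instead converts the entire sequence at once: it pairs the wens of each $G_i$ consecutively along the circle, mirrors the arcs between paired wens to produce a wenless diagram $G_i(A_i)$, and shows each move descends (Lemma~\ref{lem32}, Theorem~\ref{thm33}). Your ``less combinatorial alternative'' $\Phi$ is in fact exactly that construction, so you have correctly located the paper's route as well as the key mechanism (a wen transported once around the circle realises \textrm{M}).

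The genuine gap is the step you yourself flag: the claim that each \textrm{R1}--\textrm{R8} move inside a bump ``is supported in a disk off which $(w,w')$ can first be slid, so it may be performed at wen count $m$.'' Sliding a wen past a crossing is a \textrm{W2} move, which \emph{changes} the crossing; so the slide does not commute with the \textrm{R}-move, and what must be checked is that the move induced on the wen-erased diagram is still realisable by \textrm{R1}--\textrm{R8}. This is not automatic, and it is precisely the content of Lemma~\ref{lem22} (on which Lemma~\ref{lem32} rests): for \textrm{R3} one must split into four cases according to which chord signs are flipped by the erasure, and in two of those cases the induced move is \emph{not} an \textrm{R3} but a combination of \textrm{R3} with the forbidden move \textrm{R8} (Figure~\ref{fig204}). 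So the verification is exactly where the welded hypothesis enters, and the theorem would fail without it; it cannot be deferred as bookkeeping. A second, smaller gap is the normalisation of the bump: the destroying \textrm{W4} may annihilate one newly created wen together with a pre-existing one, so the reduction to ``drag $w$ around the circle $\nu$ full times'' needs an argument (relabelling indistinguishable wens and accounting for the arcs traversed), analogous to Lemma~\ref{lem21}. Once both points are supplied --- most cleanly on Gauss diagrams, as you anticipate --- your induction closes.
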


Theorem~\ref{thm11} implies that 
the set of extended welded knots of odd type 
can be identified with that of wen knots (see also~\cite[Proposition~3.3]{D2}). 
Also, 
Theorem~\ref{thm12} induces an identification between the set of extended welded knots of even type 
and that of welded knots 
up to~\textrm{M}, improving \cite[Proposition~5.1]{D2}.
Therefore we have the following.

\begin{corollary}
\label{cor13}
There are one-to-one correspondences 
    
\begin{eqnarray*}
\{\mbox{extended welded knots of odd type}\}
&\stackrel{1:1}{\longleftrightarrow}&
\{\mbox{wen knots}\} \mbox{ and }\\
\{\mbox{extended welded knots of even type}\}
&\stackrel{1:1}{\longleftrightarrow}&
\{\mbox{welded knots}\}
/\textrm{M}.
\end{eqnarray*}
\end{corollary}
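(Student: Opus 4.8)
The plan is to deduce both bijections formally from Theorems~\ref{thm11} and~\ref{thm12}, together with the surjectivity of $f_*$ and $g_*$ already observed above; the only genuinely external input is the fact that a wenless diagram and its horizontal mirror reflection represent the same extended welded knot~\cite{D2}.

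For the odd-type correspondence I would show that $g_*$ is itself the claimed bijection. Surjectivity is already known, so it remains to prove injectivity. Suppose $D$ and $D'$ are virtual knot diagrams with a single wen and $g_*([D])=g_*([D'])$; unwinding the definition, this says precisely that $D$ and $D'$ are related by a finite sequence of \textrm{R1--R8} and \textrm{W1--W4} (with intermediate diagrams allowed to carry more wens). Theorem~\ref{thm11} upgrades this to a sequence of \textrm{R1--R8} and \textrm{W1--W3} only, so $[D]=[D']$ as wen knots. Hence $g_*$ is injective, and therefore bijective.

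For the even-type correspondence I would first verify that $f_*$ is constant on \textrm{M}-orbits, so that it descends to a well-defined map $\bar f_*\colon \{\mbox{welded knots}\}/\textrm{M}\to\{\mbox{extended welded knots of even type}\}$: if wenless diagrams $D$ and $D'$ are related by \textrm{R1--R8} and \textrm{M}, then the \textrm{R1--R8} moves are legal in the extended welded category, and \textrm{M} on a wenless diagram can be realised there by \textrm{W1--W4} (together with \textrm{R1--R8}) moves by~\cite{D2}; hence $f(D)$ and $f(D')$ represent the same extended welded knot. The induced map $\bar f_*$ is still surjective since $f_*$ is. For injectivity of $\bar f_*$, suppose $f_*([D]_w)=f_*([D']_w)$ for wenless $D,D'$; this means $D$ and $D'$ are related by \textrm{R1--R8} and \textrm{W1--W4}, so Theorem~\ref{thm12} gives a sequence of \textrm{R1--R8} and \textrm{M} relating them, i.e.\ $[D]_w$ and $[D']_w$ have the same image in $\{\mbox{welded knots}\}/\textrm{M}$. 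Thus $\bar f_*$ is a bijection, which is the second correspondence of the corollary.

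The main obstacle — really the only step that is not pure bookkeeping with the equivalence relations — is the verification that the horizontal mirror move \textrm{M} applied to a wenless diagram is realised by \textrm{R1--R8} and \textrm{W1--W4} in the extended welded category, so that $f_*$ genuinely factors through the quotient by \textrm{M}; this is exactly the point at which one invokes~\cite{D2} (or, if one prefers a self-contained argument, one can create a wen with \textrm{W4}, slide it once around the diagram using \textrm{W1--W3}, and annihilate it, checking that the net effect is \textrm{M}). Given that, the remaining arguments are formal consequences of Theorems~\ref{thm11} and~\ref{thm12}.
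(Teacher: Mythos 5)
Your proposal is correct and follows essentially the same route as the paper: the corollary is deduced formally from Theorems~\ref{thm11} and~\ref{thm12} together with the surjectivity of $f_*$ and $g_*$ noted in the introduction, and the one non-formal ingredient you isolate (that \textrm{M} on a wenless diagram is realised by creating a wen with \textrm{W4}, sliding it around the diagram, and cancelling it) is exactly the content of Lemma~\ref{lem31} in the paper. Your write-up is simply a more explicit unwinding of the paper's one-paragraph derivation.
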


The proofs of Theorems~\ref{thm11} and~\ref{thm12} 
are given in Sections~\ref{sec2} and~\ref{sec3}, respectively. We remark that we will translate the problem in terms of Gauss diagrams to prove these results. 
In Section~\ref{sec4}, 
we study extended welded links, meaning extended welded knots with multiple components to describe their structure in terms of welded links and wen links.

\section{Extended welded knots of odd type}\label{sec2}

Instead of working with virtual knot diagrams with a finite number of wens, 
it is convenient to use their associated Gauss diagrams. These diagrams allow us to clearly summarise all the combinatorial data of the considered objects, making proofs more straightforward. 
Let $D$ be a virtual knot diagram with a finite number of wens. 
We regard $D$ as the image of a circle $C$ under the immersion described as follows.
For each real crossing of~$D$, 
we connect the pair of points that is the preimage of the crossing by a chord 
which is oriented from the over-crossing to the under-crossing, 
and we decorate the chord with the sign of the crossing.
The dots on $C$ that are preimages of the wens of $D$ are also called wens. The data of~$C$, the points on~$C$, and the signed oriented chords compose the Gauss diagram $G$ related to~$D$.
See Figure~\ref{F:ExampleGauss} for an example. 
\begin{figure}[htb]
\centering
\includegraphics[scale=0.5]{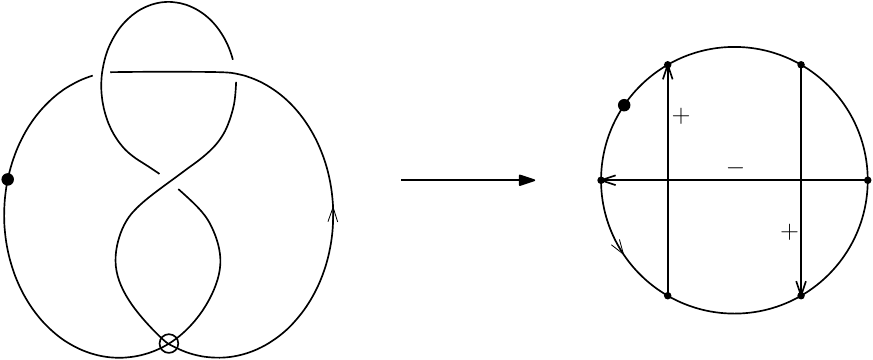}
\caption{A Gauss diagram associated to a wen knot diagram.}
\label{F:ExampleGauss}
\end{figure}

An extended welded knot can be represented as 
an equivalence class of such Gauss diagrams 
under translation of Reidemeister moves \textrm{R1}--\textrm{R3}, \textrm{R8}, and wen moves~\textrm{W1},~\textrm{W2}, and \textrm{W4} expressed in terms of Gauss diagrams. 
In fact, two virtual knot diagrams define the same Gauss diagram 
if and only if they are related by a finite sequence of 
\textrm{R4}--\textrm{R7} and~\textrm{W3}. 
Figure~\ref{fig201} shows 
wen moves \textrm{W1} and \textrm{W2} on Gauss diagrams. 
The horizontal mirror reflection \textrm{M} 
induces the change of the signs of all chords of $G$~\cite[Section~2.2]{Ichimori-Kanenobu:2012}.

\begin{figure}[htb]
\centering
\includegraphics{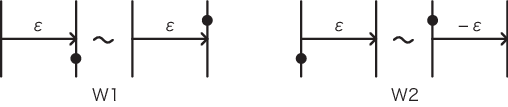}
\caption{Wen moves \textrm{W1} and \textrm{W2} on Gauss diagrams}
\label{fig201}
\end{figure}

Assume that a Gauss diagram $G$ has an odd number of wens. 
For a wen $w$ of $G$, we define the Gauss diagram $G(w)$ 
with a single wen, labeled $w$ again, as follows. 
Let $w=w_0,w_1,\dots,w_{2n}$ be the list of wens of $G$ 
in the order in which they appear on $C$ 
starting from $w=w_0$ and following the orientation of $C$. 
Let $A$ be the union of arcs on $C$ 
from $w_{2i-1}$ to $w_{2i}$ for $1\leq i\leq n$. 
The Gauss diagram $G(w)$ is obtained form $G$ 
such that 
\begin{enumerate}[label=(\roman*)]
\item the set of chords of $G$ and $G(w)$ are the same 
except their signs, 
\item \label{i:endopoint} if the initial endpoint of a chord of $G$ belongs to $A$, 
then we change the sign of the chord in $G(w)$, 
\item if the initial endpoint of a chord of $G$ belongs to $C\setminus A$, 
then the signs of the chord are the same in $G$ and $G(w)$, 
and 
\item we remove the wens $w_1,w_2,\dots,w_{2n}$ from $G$ 
and leave $w=w_0$ in $G(w)$. 
\end{enumerate}
See Figure~\ref{fig202} for an example, 
where the chords with circled signs satisfy condition~\ref{i:endopoint}.

\begin{figure}[htb]
\centering
\includegraphics{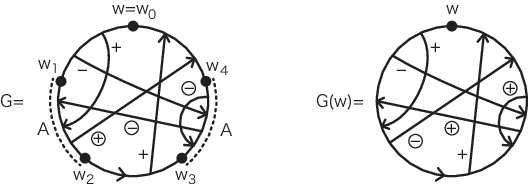}
\caption{Gauss diagrams $G$ and $G(w)$}
\label{fig202}
\end{figure}

\begin{lemma}\label{lem21}
Let $G$ be a Gauss diagram with odd number of wens. 
Then for any wens $w$ and $w'$ of $G$, 
the Gauss diagrams $G(w)$ and $G(w')$ with a single wen 
are related by a finite sequence of 
\textrm{W1} and \textrm{W2}. 
\end{lemma}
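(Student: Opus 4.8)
The plan is to reduce the general statement to the case where $w'=w_1$ is the wen immediately following $w=w_0$ along $C$, and then iterate. So first I would fix $w=w_0$ and let $w'=w_j$ be an arbitrary other wen of $G$; I want to show $G(w)$ and $G(w')$ differ by \textrm{W1} and \textrm{W2} moves. Since the relation ``$G(u)$ and $G(v)$ are \textrm{W1}--\textrm{W2} equivalent'' is an equivalence relation on the wens of $G$, it suffices to prove that consecutive wens $w_j$ and $w_{j+1}$ (indices read cyclically, with $w_{2n+1}=w_0$) give \textrm{W1}--\textrm{W2}-equivalent single-wen diagrams.

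Next I would compare $G(w_j)$ and $G(w_{j+1})$ directly from the defining recipe (i)--(iv). Both single-wen diagrams have the same underlying chords as $G$ (up to sign), and the only data that changes is \emph{which} chords get their signs flipped: for $G(w_j)$ the flipped chords are those whose initial endpoint lies in the union $A_j$ of the ``odd-to-even'' arcs measured starting from $w_j$, and similarly $A_{j+1}$ for $G(w_{j+1})$. The key combinatorial observation is that the symmetric difference $A_j \,\triangle\, A_{j+1}$ is exactly the single arc of $C$ bounded by $w_j$ and $w_{j+1}$: shifting the basepoint by one wen swaps the roles of ``odd'' and ``even'' arcs, so every arc's membership in $A$ is toggled except — after re-indexing — this amounts to toggling precisely the arc between $w_j$ and $w_{j+1}$, while the position of the surviving wen also moves from $w_j$ to $w_{j+1}$. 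Thus $G(w_j)$ and $G(w_{j+1})$ are related by sliding the single wen across the arc between $w_j$ and $w_{j+1}$ in $G$, flipping the sign of each chord endpoint the wen passes, which is precisely the content of \textrm{W1} and \textrm{W2} read on Gauss diagrams (Figure~\ref{fig201}). One has to be slightly careful that \textrm{W2} accounts for a chord both of whose endpoints lie on the swept arc (the sign flips twice, i.e.\ is unchanged, consistent with the recipe counting the initial endpoint only once), and that \textrm{W1} handles a chord endpoint on the swept arc together with the moving wen.

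I expect the main obstacle to be the bookkeeping in the previous paragraph: verifying cleanly that re-basing the wen list from $w_j$ to $w_{j+1}$ changes the flip-set $A$ by exactly the arc $[w_j,w_{j+1}]$, rather than by a more complicated set of arcs, and doing this uniformly whether $j$ is even or odd (these two cases behave differently because the parity determines whether $[w_j,w_{j+1}]$ was previously ``inside'' or ``outside'' $A$). The cleanest way to organize this is probably to introduce, for a chord $c$ with initial endpoint on the arc $[w_k,w_{k+1}]$, the quantity $\varepsilon_k(c)\in\{0,1\}$ recording whether $G(w_k)$ flips $c$, and to show $\varepsilon_{j+1}(c)=\varepsilon_j(c)$ for all chords with initial endpoint \emph{off} $[w_j,w_{j+1}]$ and $\varepsilon_{j+1}(c)=1-\varepsilon_j(c)$ for those \emph{on} it; once that identity is in hand, the translation into a sequence of \textrm{W1} and \textrm{W2} moves is a direct reading of Figure~\ref{fig201}. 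Finally I would note that composing such single-step equivalences along the cyclic order from $w$ to $w'$ yields the claimed sequence of \textrm{W1} and \textrm{W2} moves between $G(w)$ and $G(w')$, completing the proof.
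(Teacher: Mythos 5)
Your overall strategy is the same as the paper's (reduce by transitivity to consecutive wens and realise the single step by sliding the lone wen via \textrm{W1} and \textrm{W2}), but the key bookkeeping identity you propose to verify is stated backwards, and following it as written would make the proof fail. Computing directly from the definition: $A_j$ is the disjoint union $[w_{j+1},w_{j+2}]\cup[w_{j+3},w_{j+4}]\cup\cdots$ and $A_{j+1}$ is $[w_{j+2},w_{j+3}]\cup[w_{j+4},w_{j+5}]\cup\cdots$, so $A_j$ and $A_{j+1}$ are disjoint and their union is everything \emph{except} the arc from $w_j$ to $w_{j+1}$. Hence $A_j\,\triangle\,A_{j+1}=C\setminus[w_j,w_{j+1}]$: the chords whose flip-status changes are exactly those with initial endpoint \emph{off} the arc $[w_j,w_{j+1}]$, and those with initial endpoint on it are unchanged. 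Your displayed identity ($\varepsilon_{j+1}=\varepsilon_j$ off the arc, $\varepsilon_{j+1}=1-\varepsilon_j$ on it) is the exact negation of the correct one, and your earlier sentence asserts both versions at once (``every arc's membership in $A$ is toggled except \dots this amounts to toggling precisely the arc between $w_j$ and $w_{j+1}$''). The consequence matters: the wen must be slid from $w_j$ to $w_{j+1}$ the \emph{long} way around, i.e.\ opposite to the orientation of $C$, sweeping $C\setminus[w_j,w_{j+1}]$ --- which is precisely what the paper's proof says --- and not across the short arc between the two consecutive wens.

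A second, related confusion is your parenthetical about a chord with both endpoints on the swept arc having its sign ``flipped twice, i.e.\ unchanged.'' On Gauss diagrams (Figure~\ref{fig201}) only one of the two moves \textrm{W1}, \textrm{W2} changes a chord's sign, namely the one in which the wen passes the \emph{initial} endpoint; passing the terminal endpoint leaves the sign alone. So a chord with both endpoints on the swept arc is flipped exactly once. This is forced by consistency with the rest of the paper: carrying a wen once around the whole circle must flip \emph{every} chord sign (that is how Lemma~\ref{lem31} produces $G^\dagger$), which would be impossible if each endpoint-passing flipped the sign. With these two corrections --- symmetric difference equal to the complement of $[w_j,w_{j+1}]$, and sign flips recorded only at initial endpoints --- your argument closes up and coincides with the paper's.
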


\begin{proof} 
Let $w_0,w_1,\dots,w_{2n}$ be the wens of $G$ in this order in
which they appear with respect to the orientation of~$C$. 
It is sufficient to prove the case where 
$w=w_0$ and $w'=w_1$. 
By definition, we see that 
$G(w')$ is obtained from $G(w)$ 
by sliding $w$ to the position of $w'$ 
opposite to the orientation of $C$. 
This is realized by a sequence of \textrm{W1} and \textrm{W2} only. 
\end{proof}

\begin{lemma}\label{lem22}
Let $G$ and $G'$ be two Gauss diagrams 
with an odd number of wens. 
Suppose that $G'$ is obtained from $G$ 
by one of \textrm{R1}, \textrm{R2}, \textrm{R3}, \textrm{R8}, \textrm{W1}, 
\textrm{W2}, and \textrm{W4}. 
Let $w$ be a common wen of $G$ and $G'$. 
Then the Gauss diagrams 
$G(w)$ and $G'(w)$ with a single wen are related 
by a finite sequence of 
\textrm{R1}, \textrm{R2}, \textrm{R3}, \textrm{R8}, \textrm{W1}, and 
\textrm{W2}, without the need of \textrm{W4}. 
\end{lemma}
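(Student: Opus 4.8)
The plan is to fix a wen $w$ common to $G$ and $G'$ and argue move-by-move, according to which of the seven allowed operations produces $G'$ from $G$. The cleanest organising principle is the description of $G(w)$ in terms of the arc set $A \subseteq C$: recall that $G(w)$ keeps every chord of $G$ but flips the sign of exactly those chords whose \emph{initial} (over-crossing) endpoint lies in $A$, where $A$ is the union of the arcs $[w_{2i-1},w_{2i}]$ determined by the \emph{other} wens. So I would track, for each move, (a) how the combinatorial pattern of chords changes and (b) how the arc decomposition $A$ changes, and then check that the sign-flip bookkeeping turns the move on $G$ into a legitimate move on $G(w)$.

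First I would dispose of the moves that do not touch the wens at all. For \textrm{R1}, \textrm{R2}, \textrm{R3}, \textrm{R8}: these alter only chords, and (crucially) they are \emph{local} — all the chords they create, destroy, or permute have their initial endpoints inside one small arc of $C$ that contains no wens. Hence that arc lies entirely in $A$ or entirely in $C\setminus A$; either way all the participating chords get their signs flipped the same way in passing to $G(w)$, so the pattern that constituted, say, an \textrm{R2} pair on $G$ (two chords of opposite sign) remains an \textrm{R2} pair on $G(w)$ (signs flipped in tandem, still opposite), and likewise for \textrm{R1}, \textrm{R3}, \textrm{R8}. One has to be mildly careful that the \emph{signs} demanded by \textrm{R1} and \textrm{R8} are preserved: \textrm{R1} creates/removes an isolated chord of either sign, so it is unaffected; for \textrm{R3} and \textrm{R8} the relevant condition is on relative signs of the three chords, all of which flip together. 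So each of these four moves on $G$ becomes the same move on $G(w)$.

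Next, \textrm{W4}, which is exactly the move that the theorem must eliminate. \textrm{W4} deletes (or inserts) a pair of adjacent wens — on a Gauss diagram it removes two consecutive wens $w_j,w_{j+1}$ with no chord endpoints between them, together with any sign changes they were forcing. If the deleted pair is not $\{w, w_0\}$-adjacent in a way that affects $w$, then removing $w_j,w_{j+1}$ changes the list $w_0,\dots,w_{2n}$ to $w_0,\dots,\widehat{w_j},\widehat{w_{j+1}},\dots$, and one checks that the arc set $A$ is unchanged as a subset of $C$ (the arc $[w_{j-1},w_{j+2}]$, which was split into pieces by $w_j,w_{j+1}$, had its two "active" sub-pieces either both in $A$ or arranged so that after deletion the union of active arcs is literally the same point set). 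Since no chord endpoints lie on the vanishing arc between $w_j$ and $w_{j+1}$, no chord changes its $A$-membership, so $G(w)=G'(w)$ \emph{on the nose} — no moves needed at all. The sub-case where the deleted pair is $w_1,w_2$ (adjacent to $w=w_0$) is handled the same way after noting that $A$ only changes by deleting the arc $[w_1,w_2]$, which carries no chord endpoints.

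Finally, \textrm{W1} and \textrm{W2}, which move a wen past a chord endpoint or past another wen; here the arc set $A$ genuinely changes, and this is the step I expect to be the real content. When \textrm{W2} slides one of the $w_i$ ($i\geq 1$) across a chord endpoint, exactly one chord passes from having its initial endpoint in $A$ to having it in $C\setminus A$ or vice versa — so $G(w)$ and $G'(w)$ differ by flipping the sign of that single chord while simultaneously that same chord endpoint is being slid across $w$'s "shadow." The point is that this discrepancy is itself realised by one application of \textrm{W1} or \textrm{W2} on $G(w)$: pushing $w$ (the unique remaining wen) around the circle past that chord endpoint is precisely a \textrm{W1}/\textrm{W2} move, and by the flip rules it changes the sign of exactly that chord. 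So $G'(w)$ is obtained from $G(w)$ by one such wen move. The case of \textrm{W1}/\textrm{W2} involving $w_0=w$ itself, and the case of two wens exchanging (which changes the pairing $[w_{2i-1},w_{2i}]$ and hence can toggle several chords), is the delicate bookkeeping: there one shows the net sign change equals the effect of sliding $w$ around an appropriate arc, again a composition of \textrm{W1} and \textrm{W2}. Assembling the cases proves the lemma; I expect the two-wen-exchange sub-case of \textrm{W2} to require the most care, since it is the only place where more than one chord sign can flip at once.
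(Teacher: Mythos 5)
Your proposal has a genuine gap, and it sits exactly at the heart of the lemma: the \textrm{R3} case. You assert that \textrm{R1}--\textrm{R3} and \textrm{R8} are local in the sense that ``all the chords they create, destroy, or permute have their initial endpoints inside one small arc of $C$,'' so that all participating chords flip in tandem when passing to $G(w)$. This confuses locality in the plane of the diagram with locality on the circle $C$. An \textrm{R3} move involves three strands, which are three \emph{disjoint} small arcs of $C$, each carrying two of the six chord endpoints. In the configuration the paper analyses, the initial endpoints of chords $1$ and $2$ are adjacent on one of these arcs, but the initial endpoint of chord $3$ lies on a different arc, possibly separated from the first by wens; hence chord $3$ can change sign in $G(w)$ independently of chords $1$ and $2$. (You get away with the analogous claim for \textrm{R2} only because in a genuine \textrm{R2} poke one strand is over at both crossings, so both initial endpoints really do lie on a single wen-free arc; for \textrm{R3} the three initial endpoints cannot all lie on one arc.) When the flips are mismatched, the triple of chords in $G(w)$ is \emph{not} an \textrm{R3} configuration, and showing that the corresponding move on $G(w)$ can still be realised is the actual content of the lemma: the paper splits into four cases according to which signs flip, realises the mismatched case by a combination of \textrm{R3} and \textrm{R8} moves, and reduces the remaining cases by a local horizontal mirror reflection. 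This is precisely where \textrm{R8} becomes indispensable, and your argument skips it entirely.

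A secondary, more easily repaired slip: in the \textrm{W2} case where the sliding wen is not $w$, the sign change imposed by the move on $G'$ and the toggled $A$-membership of the crossed endpoint cancel, so $G(w)=G'(w)$ on the nose; applying an additional wen move to $G(w)$, as you suggest, would displace the wen $w$ (which has not moved) and break the identification. Your treatment of \textrm{W4} agrees with the paper's ($G(w)=G'(w)$), and the overall strategy of fixing $w$ and arguing move-by-move is the right one -- but without the \textrm{R3} case analysis the proof does not go through.
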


\begin{proof}
The proof descends almost straightforwardly from the definition. 
In fact, if $G$ and $G'$ are related by \textrm{R1}, \textrm{R2}, or \textrm{R8}, 
then so are $G(w)$ and $G'(w)$. 
If $G$ and $G'$ are related by \textrm{W1} or \textrm{W4}, 
then we have $G(w)=G'(w)$. 

Assume that $G$ and $G'$ are related by \textrm{W2}. 
If the wen in \textrm{W2} is $w$, 
then $G(w)$ and $G'(w)$ are related by 
\textrm{W2}. 
If the wen in \textrm{W2} is not $w$, 
then we have $G(w)=G'(w)$. 

Assume that $G$ and $G'$ are related by \textrm{R3}. 
It is sufficient to check the move as shown in Figure~\ref{fig203}; 
the other cases are described as a combination of 
this move and \textrm{R2} moves,
or its local horizontal mirror reflection. 
We label the three chords 
by $1$, $2$, and $3$ as in the figure.

\begin{figure}[htb]
\centering
\includegraphics{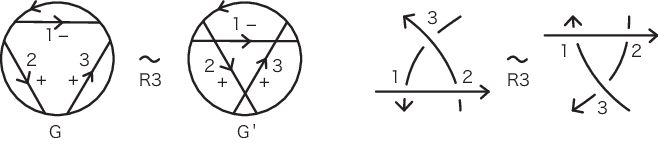}
\caption{A typical Reidemeister move \textrm{R3}}
\label{fig203}
\end{figure}

Comparing the signs of chords $1$, $2$, and $3$ 
in $G$ and $G(w)$, 
we have four cases as follows. 
We remark that the initial endpoints of chords 
$1$ and $2$ are adjacent on the circle $C$. 
\begin{enumerate}[label=(\roman*)]
\item \label{i:first} The signs of the chords $1$, $2$, and $3$ 
are the same in $G$ and $G(w)$, respectively. 
\item \label{i:second} The signs of the chords $1$ and $2$ 
are the same in $G$ and $G(w)$, respectively, 
and the sign of the chord $3$ is opposite. 
\item \label{i:third} The signs of the chords $1$ and $2$ 
are opposite in $G$ and $G(w)$, respectively, 
and the sign of the chord $3$ is the same. 
\item \label{i:fourth} The signs of the chords $1$, $2$, and $3$ 
are opposite in $G$ and $G(w)$, respectively.
\end{enumerate} 

In case \ref{i:first}, 
the Gauss diagrams $G(w)$ and $G'(w)$ are related by \textrm{R3}. 
In case \ref{i:second}, 
$G(w)$ and $G'(w)$ are related by a sequence of 
\textrm{R3} and \textrm{R8} moves as shown in Figure~\ref{fig204}.

\begin{figure}[htb]
\centering
\includegraphics{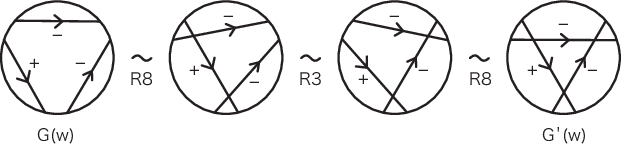}
\caption{$G(w)$ and $G'(w)$ are related by \textrm{R3} and \textrm{R8}}
\label{fig204}
\end{figure}

Cases \ref{i:third} and \ref{i:fourth} reduce to 
\ref{i:second} and \ref{i:first} by a local horizontal mirror reflection, 
respectively. 
Thus $G(w)$ and $G'(w)$ are related by a sequence of moves that do not include~\textrm{W4}. 
\end{proof}

The following is an interpretation of 
Theorem~\ref{thm11} in terms of 
Gauss diagrams. 

\begin{theorem}\label{thm23}
Let $G$ and $G'$ be two Gauss diagrams 
with a single wen.
If $G$ is related to $G'$ 
by a finite sequence of 
\textrm{R1}--\textrm{R3}, \textrm{R8}, 
\textrm{W1}, \textrm{W2}, and \textrm{W4}, 
then they are related by a finite sequence of 
\textrm{R1}--\textrm{R3}, \textrm{R8}, 
\textrm{W1}, and \textrm{W2}, 
without the need of \textrm{W4}.
\end{theorem}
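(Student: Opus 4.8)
The plan is to exploit the auxiliary construction $G \mapsto G(w)$ introduced just before Lemma~\ref{lem21}, together with Lemmas~\ref{lem21} and~\ref{lem22}, by turning the single-wen Gauss diagrams $G$ and $G'$ into Gauss diagrams with an odd number of wens, tracking what happens through the given sequence of moves, and then retracting back to a single wen. More precisely, suppose $G = G_0, G_1, \dots, G_k = G'$ is a sequence of Gauss diagrams, each obtained from the previous by one of \textrm{R1}--\textrm{R3}, \textrm{R8}, \textrm{W1}, \textrm{W2}, \textrm{W4}. Since \textrm{W4} is a purely local move on wens, each $G_i$ has an odd number of wens (the parity being a move invariant). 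The idea is to choose, compatibly along the whole sequence, a family of wens $w_i$ with $w_i$ a wen of $G_i$, such that $w_i$ and $w_{i+1}$ can be taken to be a common wen of $G_i$ and $G_{i+1}$ whenever the move $G_i \to G_{i+1}$ does not create or destroy that particular wen; this is possible because each elementary move changes the wen set only locally, so away from the support of the move there is always a wen that survives, and if necessary we first apply Lemma~\ref{lem21} to slide to such a wen.

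Concretely, the first step is to reduce to the case where $G$ and $G'$ each literally have their unique wen equal to some fixed $w_0$, $w_k$, and then to run the following induction on $k$. For the inductive step, consider the move $G_i \to G_{i+1}$. By Lemma~\ref{lem21}, for the Gauss diagram $G_i$ (which has an odd number of wens) and any two wens $u, u'$ of $G_i$, the single-wen diagrams $G_i(u)$ and $G_i(u')$ are related by a sequence of \textrm{W1} and \textrm{W2}; the same holds for $G_{i+1}$. So it does not matter which wen we pick to define $G_i(\cdot)$ — all choices give the same single-wen diagram up to \textrm{W1}, \textrm{W2}. Now pick a wen $w$ that is common to $G_i$ and $G_{i+1}$; such a $w$ exists because the elementary move alters only finitely many wens in a bounded region, and $G_i$ has at least one wen, so after possibly invoking Lemma~\ref{lem21} to reposition, we may assume the move does not touch $w$. (When the move is \textrm{W1}, \textrm{W2}, or \textrm{W4} and $G_i$ has exactly one wen, i.e.\ $n=0$, there may be no common wen; but then $G_i, G_{i+1}$ already have a single wen and are related by \textrm{W1}, \textrm{W2}, or, in the \textrm{W4} case, $G_i(w)=G_{i+1}(w')$ after sliding, handled directly.) Then Lemma~\ref{lem22} gives that $G_i(w)$ and $G_{i+1}(w)$ are related by \textrm{R1}--\textrm{R3}, \textrm{R8}, \textrm{W1}, \textrm{W2}, with no \textrm{W4}. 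Composing these over all $i$, and using Lemma~\ref{lem21} to reconcile the possibly different wen choices at consecutive steps, we obtain that $G(w_0)$ and $G'(w_k)$ are related by a sequence of \textrm{R1}--\textrm{R3}, \textrm{R8}, \textrm{W1}, \textrm{W2}. Finally, since $G$ already has a single wen, $G(w_0)$ differs from $G$ only by the sign convention in the definition of $G(\cdot)$ — but with a single wen the arc set $A$ is empty, so $G(w_0) = G$; likewise $G'(w_k) = G'$. This yields the desired conclusion, and Theorem~\ref{thm23} follows; Theorem~\ref{thm11} then follows by the translation between virtual knot diagrams and Gauss diagrams described at the start of Section~\ref{sec2}.

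The main obstacle I expect is the bookkeeping around the choice of the common wen $w$ at each step and the reconciliation between consecutive steps: the wen used to define $G_i(\cdot)$ coming out of step $i-1$ need not be the one convenient for step $i$, so one must repeatedly insert applications of Lemma~\ref{lem21} to switch between $G_i(u)$ and $G_i(w)$ for different wens $u, w$ of $G_i$. One should also be slightly careful with the degenerate cases where an elementary move acts near the only available wen (so that no ``common'' wen exists in the naive sense); these need to be checked by hand, but they are exactly the situations already covered inside the proof of Lemma~\ref{lem22} (for \textrm{W1}, \textrm{W2}, \textrm{W4}) or are trivial. Apart from this, the argument is essentially a diagram chase through the two lemmas, and no genuinely new geometric input is required beyond the construction $G \mapsto G(w)$ and the sign-flip description of \textrm{M} on chords.
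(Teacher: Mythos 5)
Your proposal is correct and follows essentially the same route as the paper: interleave Lemma~\ref{lem22} (applied to a common wen of $G_i$ and $G_{i+1}$) with Lemma~\ref{lem21} (to reconcile the wen choices on each $G_i$), observing that $G(w_0)=G$ and $G'(w_{s-1})=G'$ when the diagrams have a single wen. The only difference is that you worry about the existence of a common wen more than necessary --- since the wen count is odd (hence at least one) and W1, W2 preserve the wen while W4 changes the count by two, a common wen always exists --- but this extra caution does not affect correctness.
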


\begin{proof}
Let $G=G_0,G_1,\dots,G_s=G'$ be 
a finite sequence of Gauss diagrams 
such that $G_{i+1}$ is obtained from $G_i$ 
by one of \textrm{R1}--\textrm{R3}, \textrm{R8}, 
\textrm{W1}, \textrm{W2}, and \textrm{W4}. 
For each $i$ with $0\leq i\leq s-1$, 
we chose a common wen $w_i$ of $G_i$ and $G_{i+1}$. 

Now we consider the sequence of Gauss diagrams 
with a single wen 
\[G_0(w_0),G_1(w_0),G_1(w_1),G_2(w_1),G_2(w_2),
G_3(w_2),\dots,
G_{s-1}(w_{s-1}),G_s(w_{s-1}).\]
Since $G$ and $G'$ have each a single wen, 
we have $G=G_0(w_0)$ and $G_s(w_{s-1})=G'$. 
Furthermore, for each $i$ with $0\leq i\leq s-1$, Gauss diagrams $G_{i+1}(w_i)$ and $G_{i+1}(w_{i+1})$ 
are related without \textrm{W4} moves by Lemma~\ref{lem21}, 
and $G_i(w_i)$ and $G_{i+1}(w_i)$ are related without \textrm{W4} moves
by Lemma~\ref{lem22}. 
Thus we have the conclusion. 
\end{proof}

\section{Extended welded knots of even type}\label{sec3}

In this section, 
we assume that $G$ is a Gauss diagram with an even number of wens. 
We denote by $G^\dagger$ the Gauss diagram 
obtained from $G$ by a horizontal mirror reflection~\textrm{M}. 
that is, $G^\dagger$ is obtained from $G$ 
by changing the signs of all chords of $G$.

\begin{lemma}\label{lem31}
Let $G$ be a Gauss diagram with no wen. 
Then $G$ and its horizontal mirror reflection 
$G^\dagger$ are related by 
a finite sequence of moves \textrm{W1}, \textrm{W2}, 
and \textrm{W4}.  
\end{lemma}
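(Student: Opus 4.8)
The plan is to produce an explicit sequence of wen moves transforming a Gauss diagram $G$ with no wens into $G^\dagger$, the diagram obtained by flipping the signs of every chord. The key observation is that introducing a pair of wens with \textrm{W4} (a wen creation/annihilation move) and then sliding one of them all the way around the circle $C$ back to meet its partner realises exactly a global sign change on all chords: by the definition of the effect of passing a wen over a chord endpoint (the mechanism underlying the construction of $G(w)$ in Section~\ref{sec2}), moving a wen once around $C$ changes the sign of every chord whose initial endpoint it crosses, which is every chord.

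First I would use \textrm{W4} to create two adjacent wens $w_0$ and $w_1$ somewhere on $C$; this gives a Gauss diagram with two wens and the same chords as $G$. Next I would slide the wen $w_1$ along $C$, following the orientation, using \textrm{W1} to pass it over chord endpoints and \textrm{W2} to pass it over the other wen $w_0$ if needed, carrying it all the way around the circle until it returns to a position adjacent to $w_0$ on the other side. Each time $w_1$ passes the initial endpoint of a chord, that chord's sign flips; after a full loop every chord has been flipped exactly once (the chords whose initial endpoint $w_1$ does not cross are precisely none, since $w_1$ traverses all of $C$), so the chord data is now that of $G^\dagger$. Finally I would apply \textrm{W4} again to annihilate the pair $w_0, w_1$, leaving a Gauss diagram with no wens whose chords are exactly those of $G^\dagger$.

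The main point to be careful about is the bookkeeping of what ``passing a wen over a chord endpoint'' does to signs, and in particular checking that sliding $w_1$ once around $C$ flips each chord an odd number of times — once for its initial endpoint — rather than also picking up a contribution from its terminal endpoint. Here I would appeal to the convention already fixed in the paper, namely that in moves \textrm{W1} the sign change is governed by the initial endpoint of the chord (the same convention that makes clauses \ref{i:endopoint} and (iii) of the definition of $G(w)$ consistent), so that the terminal endpoint contributes no sign change. One should also note that the two \textrm{W4} applications are legitimate since \textrm{W4} is among the allowed moves, and that the intermediate diagrams all have an even number (two) of wens, so every move in the sequence is one of \textrm{W1}, \textrm{W2}, \textrm{W4} as required.

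I expect the only genuine obstacle to be a purely diagrammatic one: verifying, from Figure~\ref{fig101} and Figure~\ref{fig201}, that the local picture of sliding a wen past a chord endpoint indeed changes only that chord's sign and does so according to the initial/terminal distinction used above, and that \textrm{W2} lets a wen pass another wen without side effects. Once that local analysis is in hand, the global argument is just ``one full revolution flips all chords,'' and the lemma follows.
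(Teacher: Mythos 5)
Your proof is correct and takes essentially the same approach as the paper's: both introduce a pair of wens by \textrm{W4}, carry one wen once around the circle $C$ by \textrm{W1} and \textrm{W2} (flipping each chord's sign exactly once, via its initial endpoint), and cancel the pair by \textrm{W4}. The only difference is that you make the sign bookkeeping explicit, which the paper leaves implicit.
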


\begin{proof} 
We introduce a pair of wens 
on the circle $C$ of $G$ by \textrm{W4}. 
Then we move one of the wens around $C$ 
by \textrm{W1} and \textrm{W2}. 
We finally  cancel the pair of wens 
by \textrm{W4} again. 
Then the obtained diagram is $G^\dagger$. 
\end{proof}

Let $w_1,\dots,w_{2n}$ 
be the wens of $G$ appearing 
in this order along $C$. 
Let $A'$ (resp. $A''$) 
be the union of arcs on $C$ 
from $w_{2i-1}$ to $w_{2i}$ 
(resp. $w_{2i}$ to $w_{2i+1}$) 
for $1\leq i\leq n$, 
where $w_{2n+1}=w_1$. 
In convenience, if $G$ has no wen, 
then we put $A'=\emptyset$ and $A''=C$.

For a union of arcs $A\in\{A',A''\}$, 
we define the Gauss diagram $G(A)$ 
in a similar way to 
$G(w)$ in Section~\ref{sec2} as follows. 
The Gauss diagram $G(A)$ is obtained from $G$ 
in such a way that 
\begin{enumerate}[label=(\roman*)]
\item the set of chords of $G$ and $G(A)$ 
are the same except for their signs, 
\item if the initial endpoint of a chord of $G$ 
belongs to $A$, 
then we change the sign of the chord in $G(A)$, 
\item if the initial endpoint of a chord of $G$ 
belongs to $C\setminus A$, 
then the signs of the chord are the same 
in $G$ and $G(A)$, and 
\item we remove all the wens from $G$. 
\end{enumerate}

Let $G$ and $G'$ be two Gauss diagrams 
with an even number of wens. 
Assume that $G'$ is obtained from $G$ 
by one of \textrm{R1}--3, \textrm{R8}, \textrm{W1}, \textrm{W2}, or~\textrm{W4}. 
Fix a union of arcs $A$ for $G$. 
Then there is a union of arcs $A'$ for $G'$ 
such that $A$ and $A'$ coincide 
outside of the local move.

\begin{lemma}\label{lem32}
Let $G$, $G'$, $A$ and $A'$ be as above. 
Then the Gauss diagrams 
$G(A)$ and $G'(A')$ with no wen are related 
by a finite sequence of 
\textrm{R1}--\textrm{R3} and \textrm{R8}. 
\end{lemma}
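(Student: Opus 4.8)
The plan is to mimic, almost verbatim, the proof of Lemma~\ref{lem22}, treating each possible elementary move $G\to G'$ separately. The one structural simplification compared with that lemma is that $G(A)$ and $G'(A')$ carry no wens at all, so every wen move is expected to \emph{collapse to an honest equality} $G(A)=G'(A')$ rather than to a further wen move; this is why the target move set here is just \textrm{R1}--\textrm{R3} and \textrm{R8}.

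First I would dispose of the moves \textrm{R1}, \textrm{R2}, and \textrm{R8}. Each is supported on a region of $C$ containing no wen, so the at most two initial endpoints it involves lie on a single arc of $C$ cut out by the wens, hence lie either both in $A$ or both in $C\setminus A$. Consequently the recipe defining $G(A)$ flips the signs of all the chords in the region, or of none of them. Since \textrm{R1} is insensitive to the sign of the chord it creates, \textrm{R2} only requires the two parallel chords to carry opposite signs (a condition preserved when both are flipped), and \textrm{R8} merely permutes endpoints without altering any sign, the very same move relates $G(A)$ and $G'(A')$ in each of these three cases.

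Next come the wen moves \textrm{W1}, \textrm{W2}, \textrm{W4}. For \textrm{W4} the created or cancelled pair of wens bounds a tiny arc of $C$ containing no chord endpoint; choosing $A'$ to agree with $A$ away from that arc, one reads off directly that $G(A)=G'(A')$. For \textrm{W1} and \textrm{W2}, which slide a wen past an endpoint of a chord, the definition of $G(A)$ has been arranged precisely so that the change the move inflicts on that chord (its sign, and its orientation if the wen reverses the arrow) is exactly compensated by the change it inflicts on the union of arcs $A\mapsto A'$, and hence on the set of chords whose signs get flipped; unwinding the two definitions gives $G(A)=G'(A')$ once more. This bookkeeping — matching the effect of a wen crossing a crossing against the redistribution of the arcs — together with the \textrm{R3} case below, is where essentially all the genuine content of the argument lies; everything else is immediate from the definition of $G(A)$.

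Finally, for \textrm{R3} I would repeat the analysis of Lemma~\ref{lem22} word for word. Reducing to the typical move of Figure~\ref{fig203} and labelling the three chords $1$, $2$, $3$ — with the initial endpoints of $1$ and $2$ adjacent on $C$, so that their signs are flipped simultaneously in passing to $G(A)$ — one is left with the same four cases according to which of the three signs agree in $G$ and $G(A)$. When all three agree, $G(A)\to G'(A')$ is again an \textrm{R3}; when only the sign of chord $3$ is reversed, it is the combination of an \textrm{R3} with two \textrm{R8} moves exactly as in Figure~\ref{fig204}; and the two remaining cases reduce to these by a local horizontal mirror reflection, which flips the signs of the three chords in the region. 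Hence $G(A)$ and $G'(A')$ are related by \textrm{R1}--\textrm{R3} and \textrm{R8} alone, as required. The expected main obstacle, apart from the \textrm{W1}/\textrm{W2} matching already mentioned, is precisely the unavoidable appearance of \textrm{R8} in that second \textrm{R3} subcase: it must be checked that the resulting chord configuration is one that \textrm{R3}$+$\textrm{R8} can indeed realise, which is the point of Figure~\ref{fig204}.
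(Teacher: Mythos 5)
Your proposal is correct and follows essentially the same route as the paper, which simply reruns the case analysis of Lemma~\ref{lem22} move by move and notes that the only change is that \textrm{W2} now yields the equality $G(A)=G'(A')$ (since no wen survives in $G(A)$) rather than a residual \textrm{W2} move. Your extra bookkeeping for \textrm{R1}, \textrm{R2}, \textrm{R8}, \textrm{W4} and the four-case \textrm{R3} analysis via Figure~\ref{fig204} matches the paper's intended argument.
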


\begin{proof}
The proof is almost the same as in Lemma~\ref{lem22} 
except for move \textrm{W2}. 
If $G$ and $G'$ are related by~\textrm{W2}, 
then we have $G(A)=G'(A')$ only. 
\end{proof}

The following is an interpretation of 
Theorem~\ref{thm12} in terms of Gauss diagrams. 

\begin{theorem}\label{thm33}
Let $G$ and $G'$ be two Gauss diagrams with no wen. 
If $G$ and $G'$ are related by a finite sequence of 
\textrm{R1}--\textrm{R3}, \textrm{R8}, \textrm{W1}, \textrm{W2}, 
and~\textrm{W4}, 
then they are related by a finite sequence 
of \textrm{R1}--\textrm{R3},~\textrm{R8}, and~\textrm{M}. 
\end{theorem}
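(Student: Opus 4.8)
The plan is to mimic the argument of Theorem~\ref{thm23}, replacing the single-wen assignment $G\mapsto G(w)$ with the no-wen assignment $G\mapsto G(A)$, and carefully tracking how the choice of arc system $A$ changes along a sequence of moves. Given a finite sequence $G=G_0,G_1,\dots,G_s=G'$ in which each $G_{i+1}$ is obtained from $G_i$ by one of \textrm{R1}--\textrm{R3}, \textrm{R8}, \textrm{W1}, \textrm{W2}, \textrm{W4}, I would first fix for each intermediate diagram $G_i$ a union of arcs $A_i\in\{A',A''\}$, chosen so that $A_i$ and $A_{i+1}$ coincide outside the support of the $i$-th local move (this is exactly the compatibility statement recorded just before Lemma~\ref{lem32}). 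Since $G_0$ and $G_s$ have no wen, the convention $A'=\emptyset$, $A''=C$ means $G_0(A_0)$ is either $G_0$ itself (if $A_0=\emptyset$) or $G_0^\dagger$ (if $A_0=C$), and likewise $G_s(A_s)\in\{G_s,G_s^\dagger\}$; in either case $G_i(A_i)$ differs from the target by at most an~\textrm{M}.

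The core of the argument is then to interpolate, for each $i$, between $G_i(A_i)$ and $G_{i+1}(A_{i+1})$ using only \textrm{R1}--\textrm{R3}, \textrm{R8}, and~\textrm{M}. I would split this into two sub-steps, exactly as in the proof of Theorem~\ref{thm23}. First, $G_{i+1}(A_{i+1}')$ and $G_{i+1}(A_{i+1}'')$ — the two possible no-wen diagrams attached to $G_{i+1}$ from its two arc systems — are related by~\textrm{M}: indeed, for any chord of $G_{i+1}$ exactly one of its initial endpoint's two "sides" lies in $A'$ and the other in $A''$ (since $A'\sqcup A''=C$ up to the finitely many wen points), so passing from $A'$ to $A''$ flips the sign of every chord, which is precisely the effect of~\textrm{M} on a Gauss diagram. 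This plays the role Lemma~\ref{lem21} played in the odd case. Second, $G_i(A_i)$ and $G_{i+1}(A_{i+1})$ — with $A_i,A_{i+1}$ the compatibly chosen arc systems straddling the $i$-th move — are related by \textrm{R1}--\textrm{R3} and \textrm{R8} alone: this is exactly Lemma~\ref{lem32}. Concatenating these, and inserting an~\textrm{M} whenever I need to switch between the $A'$- and $A''$-versions at an intermediate diagram, produces a sequence from $G$ (or $G^\dagger$) to $G'$ (or $G'^\dagger$) using only \textrm{R1}--\textrm{R3}, \textrm{R8}, and~\textrm{M}; prepending and/or appending one final~\textrm{M} if needed gives the desired sequence from $G$ to $G'$.

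One point that needs a little care, and which I expect to be the main obstacle, is bookkeeping the arc systems consistently across the whole sequence rather than one move at a time. The issue is that the labels $A'$ and $A''$ for a diagram $G_i$ depend on an ordering of its wens starting from a chosen basepoint, and the "compatible" arc system $A_{i+1}$ produced by a \textrm{W4} move (which creates or destroys a pair of adjacent wens) may swap the roles of $A'$ and $A''$ on the rest of the circle; similarly \textrm{W1} and \textrm{W2} slide a wen past a chord endpoint, which can move a chord endpoint from an $A'$-arc to an $A''$-arc. I would handle this by not insisting that $A_i$ always be the "$A'$" system: I allow $A_i\in\{A_i',A_i''\}$ freely, use the compatibility statement before Lemma~\ref{lem32} to pick $A_{i+1}$ agreeing with $A_i$ outside the move, and absorb any global parity flip into an application of~\textrm{M} via the first sub-step above. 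Once this is set up, the telescoping argument goes through verbatim as in Theorem~\ref{thm23}, and Theorem~\ref{thm33} — hence, after translating back through Gauss diagrams, Theorem~\ref{thm12} — follows.
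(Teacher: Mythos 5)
Your proposal is correct and follows essentially the same route as the paper: assign to each $G_i$ a compatible arc system $A_i$, apply Lemma~\ref{lem32} to each step, and absorb the global sign ambiguity into a single application of~\textrm{M}. The paper streamlines the bookkeeping you worry about by simply setting $A_0=\emptyset$ and propagating the choice forward via the compatibility statement, so that no intermediate switches between the $A'$- and $A''$-systems (and hence no analogue of Lemma~\ref{lem21}) are ever needed — only one possible~\textrm{M} at the very end, when $A_s=C$.
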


\begin{proof} 
Let $G=G_0,G_1,\dots,G_s=G'$ be 
a finite sequence of Gauss diagrams 
such that $G_{i+1}$ is obtained from $G_i$ 
by one of \textrm{R1}--\textrm{R3}, 
\textrm{R8}, \textrm{W1}, \textrm{W2}, 
and \textrm{W4}. 
Put $A_0=\emptyset$ for $G_0$. 
We define the union of arcs $A_i$ for $G_i$ 
$(1\leq i\leq s)$ such that 
$A_{i-1}$ and $A_i$ 
coincide outside of the local move.

Now we consider the sequence of Gauss diagrams with no wens
\[G_0(A_0),G_1(A_1),G_2(A_2),
\dots,
G_{s-1}(A_{s-1}),G_s(A_s).\]
Since $G_0$ has no wen and $A_0=\emptyset$, 
we have $G_0(A_0)=G$. 
On the other hand, 
since $G_s$ has no wen and 
$A_s=\emptyset$ or $C$, 
we have $G_s(A_s)=G'$ or $G'^\dagger$. 
Furthermore, for each $i$ with $0\leq i\leq s-1$, 
the Gauss diagrams 
$G_i(A_i)$ and $G_{i+1}(A_{i+1})$ 
with no wen are related by 
a finite sequence of 
\textrm{R1}--\textrm{R3} and \textrm{R8} 
by Lemma~\ref{lem32}. 
By adding 
\[G_s(A_s)=G_s(C)=G'^{\dagger}
\stackrel{\textrm{M}}{\longrightarrow}
G'\]
after the sequence as above if necessary, 
we have the conclusion. 
\end{proof}

\begin{remark}
For a Gauss diagram $G$ 
of an extended welded knot of 
\textit{ odd} type, 
we can also consider a horizontal mirror reflection 
\textrm{M} as well as that of even type. 
In this case, 
\textrm{M} is generated by 
\textrm{W1} and \textrm{W2} 
so that we do not require it. 
\end{remark}

\section{Extended welded links}\label{sec4}

It is natural to generalize the notion of an extended welded knot 
to the case of links.
A $\mu$-component \emph{extended welded link} is 
an equivalence class of 
virtual link diagrams consisting of $\mu$ circles 
with a finite number of wens 
under \textrm{R1}--\textrm{R8} and \textrm{W1}--\textrm{W4}. 

We assume that a $\mu$-component 
extended welded links is \textit{ ordered}; that is, 
the components are labeled by $1,2,\dots,\mu$. 
We say that an extended welded link is 
\textit{ of type $(\delta_1,\delta_2,\dots,\delta_{\mu})$} 
if the $i$th component has even 
(resp. odd) number of wens 
for $\delta_i=0$ (resp. $\delta_i=1$). 
As well as an extended welded knot, 
it is convenient to use the Gauss diagram associated with 
a virtual link diagram. 

For each $i$ with $\delta_i=0$, 
let $\textrm{M}_i$ denote the operation for 
a Gauss diagram $G$ which changes 
the signs of chords whose initial endpoints 
belong to the $i$th component circle. 
Then we have the following. 
The proof is similar to those of 
Theorems~\ref{thm23} and \ref{thm33}, 
we leave it to the reader. 

\begin{theorem}\label{thm41}
Let $G$ and $G'$ be two Gauss diagrams 
of the same type $(\delta_1,\dots,\delta_{\mu})$. 
Suppose that the $i$th component has 
exactly $\delta_i$ wen for $i=1,2,\dots,\mu$. 
If $G$ and $G'$ are related 
by a finite sequence of 
\textrm{R1}--\textrm{R3}, \textrm{R8}, 
\textrm{W1}, \textrm{W2}, and \textrm{W4}, 
then they are related by 
a finite sequence of 
\textrm{R1}--\textrm{R3}, 
\textrm{R8}, \textrm{W1}, \textrm{W2}, and $\textrm{M}_i$ 
with $\delta_i=0$. 
\hfill$\Box$
\end{theorem}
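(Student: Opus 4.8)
The plan is to combine the telescoping arguments of Theorems~\ref{thm23} and~\ref{thm33}, treating each component according to the parity of its number of wens. First I would set up a combined reduction. Given a Gauss diagram $H$ of type $(\delta_1,\dots,\delta_\mu)$, a choice of a wen $w^{(j)}$ on each odd component (those with $\delta_j=1$) and a choice of a union of arcs $A^{(j)}$ on each even component (those with $\delta_j=0$, with $A^{(j)}$ understood as in Section~\ref{sec3}), define a Gauss diagram $H\bigl(\{w^{(j)}\},\{A^{(j)}\}\bigr)$ by keeping all chords of $H$, changing the sign of a chord exactly when its initial endpoint lies in $A^{(j)}$ for the relevant even component~$j$, or in the arc-union of the odd component~$j$ determined by $w^{(j)}$ as in Section~\ref{sec2}, removing every wen on the even components, and removing all wens except $w^{(j)}$ on each odd component~$j$. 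By construction this diagram has exactly $\delta_j$ wens on its $j$th component.

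Next I would establish two componentwise propagation facts. The first is the multi-component analogue of Lemma~\ref{lem21}: changing the chosen wen on a single odd component, all other data being fixed, alters $H(\cdots)$ only by a finite sequence of \textrm{W1} and \textrm{W2}, since it amounts to sliding that wen along the corresponding component circle past the intervening wens. The second is the combined analogue of Lemmas~\ref{lem22} and~\ref{lem32}: if $H'$ is obtained from $H$ by a single move among \textrm{R1}--\textrm{R3}, \textrm{R8}, \textrm{W1}, \textrm{W2}, \textrm{W4}, then, choosing on each odd component a common wen of $H$ and $H'$ and on each even component arc-unions that agree away from the support of the move, the diagrams $H(\cdots)$ and $H'(\cdots)$ are related by a finite sequence of \textrm{R1}--\textrm{R3}, \textrm{R8}, \textrm{W1}, \textrm{W2}, without \textrm{W4}. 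The point here is that each of these moves is either supported in a single component (\textrm{R1}, \textrm{R8}, \textrm{W1}, \textrm{W2}, \textrm{W4}) or leaves the entire wen configuration untouched (\textrm{R2}, \textrm{R3}); hence the sign-assignment data on the components not involved can be kept constant, and on the component(s) involved one invokes verbatim the case analysis of Lemma~\ref{lem22} (including the use of \textrm{R3} and \textrm{R8} as in Figure~\ref{fig204}) for an odd component and of Lemma~\ref{lem32} (where \textrm{W2} simply yields equality of the reduced diagrams) for an even component.

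Then I would telescope exactly as in the proofs of Theorems~\ref{thm23} and~\ref{thm33}. Given a sequence $G=G_0,G_1,\dots,G_s=G'$ of moves among \textrm{R1}--\textrm{R3}, \textrm{R8}, \textrm{W1}, \textrm{W2}, \textrm{W4}, I pick for each $i$ and each odd component~$j$ a wen $w_i^{(j)}$ common to $G_i$ and $G_{i+1}$, and for each even component~$j$ a union of arcs $A_i^{(j)}$ with $A_0^{(j)}=\emptyset$ and $A_i^{(j)},A_{i+1}^{(j)}$ agreeing outside the support of the $(i+1)$-st move. Following the pattern of Theorem~\ref{thm23} on the odd components (inserting, between the reduction of $G_{i+1}$ with wen choices $w_i^{(j)}$ and the one with choices $w_{i+1}^{(j)}$, one step per odd component given by the first propagation fact) and the pattern of Theorem~\ref{thm33} on the even components (passing from $A_i^{(j)}$ directly to $A_{i+1}^{(j)}$ via the second propagation fact), one obtains a chain of Gauss diagrams of type $(\delta_1,\dots,\delta_\mu)$, each with exactly $\delta_j$ wens on component~$j$, whose consecutive terms are related by \textrm{R1}--\textrm{R3}, \textrm{R8}, \textrm{W1}, \textrm{W2}. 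Because $G$ and $G'$ already have exactly $\delta_j$ wens on component~$j$, the first term of the chain is $G$, while the last term is $G'$ except that on each even component~$j$ for which $A_s^{(j)}$ came out to be the whole $j$th component circle rather than $\emptyset$ all chords with initial endpoint on that circle have had their signs changed, i.e.\ $\textrm{M}_j$ has been applied; appending these finitely many commuting $\textrm{M}_j$'s (each its own inverse) yields the conclusion.

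I expect the only genuine obstacle to be the bookkeeping inside the second propagation fact: one must verify that a single move, whose strands (hence chords) may belong to several distinct components, never forces incompatible choices of retained wens or of arc-unions across components. This is exactly settled by the remark already used above --- that \textrm{R2} and \textrm{R3} do not move any wen, so on those steps the wens and the arc-unions may be held fixed on every component and the per-component analyses of Lemmas~\ref{lem22} and~\ref{lem32} apply independently --- so the argument goes through without new ideas.
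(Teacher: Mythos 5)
Your proposal is correct and is precisely the componentwise combination of the arguments of Theorems~\ref{thm23} and~\ref{thm33} that the paper itself intends, since the paper omits the proof with the remark that it is similar to those two theorems and leaves it to the reader. The one point you flag --- that a single move never forces incompatible choices across components, because \textrm{R2} and \textrm{R3} move no wens while the wen moves are supported on a single component, and the initial endpoints of chords $1$ and $2$ in the \textrm{R3} analysis remain adjacent on whichever component carries the over-strand --- is exactly the bookkeeping needed, so the argument goes through.
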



\bibliography{WEN.bib}{}
\bibliographystyle{alpha}

\end{document}